\documentclass[11pt]{article}
\usepackage{amsmath}
\usepackage{amsfonts}
\usepackage{amssymb}
\usepackage{amsthm}
\usepackage{hyperref}
\usepackage[a4paper, centering, vcentering]{geometry}


\usepackage{amsbsy,amscd,mathrsfs}
\usepackage{url}

\usepackage{color}
\usepackage{fullpage}
\usepackage{pst-grad} 
\usepackage{pst-plot} 

\def\blfootnote{\xdef\@thefnmark{}\@footnotetext}

\long\def\symbolfootnote[#1]#2{\begingroup%
\def\thefootnote{\fnsymbol{footnote}}\footnote[#1]{#2}\endgroup} 


\newtheorem{theorem}{Theorem}
\newtheorem{corollary}[theorem]{Corollary}
\newtheorem{proposition}[theorem]{Proposition}
\newtheorem{lemma}[theorem]{Lemma}
\theoremstyle{definition} 
\newtheorem{definition}[theorem]{Definition}
\newtheorem{example}[theorem]{Example}
\theoremstyle{remark}
\newtheorem{remark}{Remark}
\newcommand{\bt}{\begin{theorem}}
\newcommand{\et}{\end{theorem}}
\newcommand{\bl}{\begin{lemma}}
\newcommand{\el}{\end{lemma}}
\newcommand{\bp}{\begin{proposition}}
\newcommand{\ep}{\end{proposition}}
\newcommand{\bc}{\begin{corollary}}
\newcommand{\ec}{\end{corollary}}
\newcommand{\bdeff}{\begin{definition}}
\newcommand{\edeff}{\end{definition}}
\newcommand{\brem}{\begin{remark}}
\newcommand{\erem}{\end{remark}}
\newcommand{\bex}{\begin{example}}
\newcommand{\eex}{\end{example}}


\newcommand{\lp}{\left(}
\newcommand{\rp}{\right)}

\newcommand{\bi}{\begin{itemize}}
\newcommand{\iii}{\item}
\newcommand{\ei}{\end{itemize}}
\newcommand{\bd}{\begin{description}}
\newcommand{\ed}{\end{description}}

\newcommand{\bqn}{\begin{eqnarray}}
\newcommand{\eqn}{\end{eqnarray}}


\newcommand{\g}{\gamma}

\newcommand{\R}{\mathbb{R}}
\newcommand{\N}{\mathbb{N}}


\newcommand{\mc}[1]{\mathcal{ #1 }}

\newcommand{\all}{\forall\,}

\newcommand{\la}{\langle}
\newcommand{\ra}{\rangle}

\newcommand{\virg}[1]{``#1''}
\newcommand{\tx}[1]{\mathrm{#1}}



\newcommand{\rosso}{}


\newcommand{\ffoot}[1]{}

\newcommand{\VecM}{\tx{Vec}(M)}
\newcommand{\distr}{\mc{D}}
\newcommand{\metr}[2]{\la#1,#2\ra}

\newcommand{\Pg}[1]{\left\{ #1 \right\}}

\newcommand{\lapl}{\Delta}
\newcommand{\dive}{\tx{div}}
\newcommand{\grad}{\nabla}
\newcommand{\popp}{\mc{P}}
\newcommand{\iso}{\tx{Iso}}

\newcommand{\norm}[2]{\|#2\|_{#1}}

\author{Davide Barilari\footnote{CNRS, CMAP, \'Ecole Polytechnique, Paris and \'Equipe INRIA GECO Saclay-\^Ile-de-France, Paris, France - Email: \texttt{barilari@cmap.polytecnique.fr}}  
$\,$ and  Luca Rizzi\footnote{SISSA, Via Bonomea 265, Trieste - Email: \texttt{luca.rizzi@sissa.it}}}
 
\title{A formula for Popp's volume in sub-Riemannian geometry}

\begin{document}
\maketitle

\begin{abstract}
For an equiregular sub-Riemannian manifold $M$, Popp's volume is a smooth volume which is canonically associated with the sub-Riemannian structure, and it is a natural generalization of the Riemannian one. In this paper we prove a general formula for Popp's volume, written in terms of a frame adapted to the sub-Riemannian distribution. As a first application of this result, we prove an explicit formula for the canonical sub-Laplacian, namely the one associated with Popp's volume. Finally, we discuss sub-Riemannian isometries, and we prove that they preserve Popp's volume. We also show that, under some hypotheses on the action of the isometry group of $M$, Popp's volume is essentially the unique volume with such a property. 
\end{abstract}

\section{Introduction}
The problem to define a canonical volume on a sub-Riemannian manifold was first pointed out by Brockett in his seminal paper \cite{brockettctsrg}, motivated by the construction of a Laplace operator on a 3D sub-Riemannian manifold canonically associated with the metric structure, analogous to the Laplace-Beltrami operator on a Riemannian manifold.  
Recently, Montgomery addressed this problem in the general case (see \cite[Chapter 10]{montgomerybook}).

Even on a Riemannian manifold, the Laplacian (defined as the divergence of the gradient) is a second order differential operator whose first order term depends on the choice of the volume on the manifold, which is required to define the divergence. Naively, in the Riemannian case, the choice of a canonical volume is determined by the metric, by requiring that the volume of a orthonormal parallelotope (i.e. whose edges are an orthonormal frame in the tangent space) is 1.

From a geometrical viewpoint, sub-Riemannian geometry is a natural generalization of Riemannian geometry under non-holonomic constraints. Formally speaking, a sub-Riemannian manifold  is a smooth manifold $M$ endowed with a bracket-generating distribution $\distr \subset TM$, with $ k = \text{rank}\,\distr < n = \dim M$, and a smooth fibre-wise scalar product on $\distr$. From this structure, one derives a distance on $M$ - the so-called \emph{Carnot-Caratheodory metric} - as the infimum of the length of \emph{horizontal} curves on $M$, i.e. the curves that are almost everywhere tangent to the distribution. 

Nevertheless, sub-Riemannian geometry enjoys major differences with respect to the Riemannian case. For instance, 
a construction analogue to the one described above for the Riemannian volume is not possible. 
Indeed the inner product is defined only on a subspace of the tangent space, and there is no canonical way to extend it on the whole tangent space.
 
Popp's volume is a generalization of the Riemannian volume in sub-Riemannian setting. It was first defined by Octavian Popp but introduced only in \cite{montgomerybook} (see also \cite{hypoelliptic}). Such a volume is smooth only for an equiregular sub-Riemannian manifold, i.e. when the dimensions of the higher order distributions $\distr^{1}:=\distr$, $\distr^{i+1}:=\distr^{i}+[\distr^{i},\distr]$, for every $i\geq1$, do not depend on the point (for precise definitions, see Sec. 2).

Under the equiregularity hypothesis, the bracket-generating condition guarantees that there exists a minimal $m\in \N$, called \emph{step} of the structure, such that $\distr^{m}=TM$.

Then, for each $q \in M$, it is well defined the graded vector space: 
\bqn\label{eq:grq}
\text{gr}_{q}(\distr):=\bigoplus_{i=1}^{m} \distr^{i}_{q}/\distr^{i-1}_{q}, \qquad\tx{where} \ \  \distr_{q}^{0}=0.
\eqn
The vector space $\text{gr}_q(\distr)$, which can be endowed with a natural sub-Riemannian structure, is called the \emph{nilpotentization} of the structure at the point $q$, and plays a role analogous to the Euclidean tangent space in Riemannian geometry.
Popp's volume is defined  by inducing a canonical inner product on $\text{gr}_{q}(\distr)$ via the Lie brackets, and then using a \emph{non-canonical} isomorphism between $\text{gr}_{q}(\distr)$ and $T_{q}M$ to define an inner product on the whole $T_q M$. Interestingly, even though this construction depends on the choice of some complement to the distribution, the associated volume form (i.e. Popp's volume) is independent on this choice.

It is worth to recall that on a sub-Riemannian manifold, which is a metric space, the Haussdorff volume and the spherical Hausdorff volume, respectively  $\mc{H}^{Q}$  and $\mc{S}^{Q}$, are canonically defined.\footnote{Recall that the Hausdorff dimension of a sub-Riemannian manifold $M$ is given by the formula 
$Q=\sum_{i=1}^m i n_i$, where $\ n_i:=\tx{dim}\, \distr_{q}^i/\distr_{q}^{i-1}$.
In particular the Hausdorff dimension is always bigger than the topological dimension.}
The relation between Popp's volume and $\mc{S}^{Q}$ has been studied in \cite{corank1}, where the authors show how the Radon-Nikodym derivative is related with the nilpotentization of the structure. In particular they prove that  the Radon-Nikodym derivative could also be non smooth (see also \cite{corank2,corank2g}). Remember that the Hausdorff and spherical Hausdorff volumes are both proportional to the Riemannian one on a Riemannian manifold. The relation between Hausdorff measures for curves and different notions of length in sub-Riemannian geometry is also investigated in \cite{ghezzijean}.

On a contact sub-Riemannian manifold, Popp's volume coincides with the Riemannian volume obtained by \virg{promoting} the Reeb vector field to an orthonormal complement to the distribution. In the general case, unfortunately, the definition is more involved. To the authors' best knowledge, explicit formul{\ae}  for Popp's volume appeared, for some specific cases, only in \cite{corank1,corank2,corank2g}. 

The goal of this paper is to prove a general formula for Popp's volume, in terms of any \emph{adapted} frame of the tangent bundle. In order to present the main results here, we briefly introduce some concepts which we will elaborate in details in the subsequent sections. Thus, we say that a local frame $X_{1},\ldots,X_{n}$ is adapted if $X_{1},\ldots,X_{k_{i}}$ is a local frame for $\distr^{i}$, where  $k_{i}:=\dim \distr^{i}$, and $X_1,\dots,X_k$ are orthonormal. Even though it is not needed right now, it is useful to define the functions $c^l_{ij} \in C^{\infty}(M)$ by
\bqn\label{eq:sc0intro}
[X_i,X_j] = \sum_{l=1}^n c_{ij}^l X_l\,.
\eqn
With a standard abuse of notation we call them \emph{structure constants}. For $j=2,\ldots,m$ we define the \emph{adapted structure constants} $b^l_{i_1\dots \,i_j}\in C^{\infty}(M)$ as follows:
\bqn \label{eq:sc1intro}
[X_{i_1},[X_{i_2}, \dots,[X_{i_{j-1}},X_{i_j}] ]] = \sum_{l = k_{j-1}+1}^{k_j} b^l_{i_1 i_2 \dots \,i_j} X_l \mod   \distr^{j-1}\,,
\eqn
where $1\leq i_1,\dots,i_j\leq k$. These are a generalization of the $c_{ij}^l$, with an important difference: the structure constants of Eq.~\eqref{eq:sc0intro} are obtained by considering the Lie bracket of all the fields of the local frame, namely $1\leq i,j,l \leq n$. On the other hand, the adapted structure constants of Eq.~\eqref{eq:sc1intro} are obtained by taking the iterated Lie brackets of the first $k$ elements of the adapted frame only (i.e. the local orthonormal frame for $\distr$), and considering the appropriate equivalence class. For $j=2$, the adapted structure constants can be directly compared to the standard ones. Namely $b^l_{ij} = c_{ij}^l$ when both are defined, that is for $1\leq i,j \leq k$, $l\geq k+1$.

Then, we define the $k_j-k_{j-1}$ dimensional square matrix $B_{j}$ as follows:
\begin{equation}\label{eq:Bintro}
\left[B_{j}\right]^{hl} = \sum_{i_1,i_2,\dots,i_j = 1}^{k} b^h_{i_1 i_2 \dots i_j} b^l_{i_1 i_2 \dots i_j}\,,\qquad j =1,\dots,m\,,
\end{equation}
with the understanding that $B_{1}$ is the $k\times k$ identity matrix. It turns out that each $B_j$ is positive definite.

\bt \label{t:poppmain} Let $X_{1},\ldots,X_{n}$ be a local adapted frame, and let $\nu ^1,\dots,\nu^n$ be the dual frame. Then Popp's volume $\popp$ satisfies
\bqn\label{eq:poppmainintro}
\popp=\frac{1}{\sqrt{\prod_{j} \det B_{j}}} \,\nu ^1 \wedge \ldots \wedge \nu^n \,,
\eqn
where $B_{j}$ is defined by \eqref{eq:Bintro} in terms of the adapted structure constants \eqref{eq:sc1intro}.
\et
To clarify the geometric meaning of Eq.~\eqref{eq:poppmainintro}, let us consider more closely the case $m = 2$. If $\distr$ is a step $2$ distribution, we can build a local adapted frame $\{X_1,\dots,X_k,X_{k+1},\dots,X_n\}$ by completing any local orthonormal frame $\{X_1,\dots,X_k\}$ of the distribution to a local frame of the whole tangent bundle. Even though it may not be evident, it turns out that $B_2^{-1}(q)$ is the Gram matrix of the vectors $X_{k+1},\dots,X_n$, seen as elements of $T_qM/ \distr_q$. The latter has a natural structure of inner product space, induced by the surjective linear map $[\,,\,]:\distr_q\otimes\distr_q \to T_qM/\distr_q$ (see Lemma~\ref{l:mont1}). Therefore, the function appearing at the beginning of Eq.~\eqref{eq:poppmainintro} is the volume of the parallelotope whose edges are $X_1,\dots,X_n$, seen as elements of the orthogonal direct sum $\text{gr}_q(\distr)= \distr_q \oplus T_qM/\distr_q$.

With a volume form at disposal, one can naturally define the associated divergence operator, which acts on vector fields. Moreover, the sub-Riemannian structure allows to define the horizontal gradient of a smooth function. Then, we define a canonical sub-Laplace operator as $\lapl := \dive \circ \grad$, which generalizes the Laplace-Beltrami operator. This is a second order differential operator, which has been studied in \cite{hypoelliptic,mioheat}.
As a corollary to Theorem~\ref{t:poppmain}, we obtain a formula for the sub-Laplacian $\lapl$ in terms of any local adapted frame.

\bc \label{c:sublapl} Let $X_{1},\ldots,X_{n}$ be a local adapted frame. Let $\lapl$ be the canonical sub-Laplacian. Then
\bqn \label{eq:sublapl}
\lapl = \sum_{i=1}^k  X_i^2 - \lp \frac{1}{2}\sum_{j=1}^m \tx{Tr}(B^{-1}_j X_i(B_j)) + \sum_{l=1}^n c_{il}^l\rp  X_i\,,
\eqn
where $c_{ij}^l$ are the structure constants \eqref{eq:sc0intro}, and $B_{j}$ is defined by \eqref{eq:Bintro} in terms of the adapted structure constants \eqref{eq:sc1intro}.
\ec
\brem
If $M$ is a Carnot group (i.e. a connected, simply connected nilpotent group, whose Lie algebra is graded, and whose sub-Riemannian structure is left invariant) the $B_j$ are constant. Moreover, $\forall i$ $\sum_{l=1}^n c_{il}^l = 0$, as a consequence of the graded structure. Then, in this case, the sub-Laplacian is a simple \virg{sum of squares} $\lapl = \sum_{i=1}^k X_i^2$. 
This is a manifestation of the fact that Carnot groups are to sub-Riemannian geometry as Euclidean spaces are to Riemannian geometry. Indeed, on $\mathbb{R}^n$, the Laplace-Beltrami operator is a simple sum of squares.

More in general, in \cite{hypoelliptic}, the authors prove that for left-invariant structures on unimodular Lie groups the sub-Laplacian is a sum of squares.
\erem

In the last part of the paper we discuss the conditions under which a local isometry preserves Popp's volume. In the Riemannian setting, an isometry is a diffeomorphism such that its differential is an isometry for the Riemannian metric. The concept is easily generalized to the sub-Riemannian case.
\begin{definition}
A (local) diffeomorphism $\phi: M \to M$ is a \emph{(local) isometry} if its differential $\phi_* : TM \to TM$ preserves the sub-Riemannian structure $(\distr,\metr{\cdot}{\cdot})$, namely
\begin{itemize}
\item[i)] $\phi_*(\distr_q) = \distr_{\phi(q)}$ for all $q \in M$,
\item[ii)] $\metr{\phi_* X}{\phi_* Y}_{\phi(q)} = \metr{X}{Y}_q$ for all $q\in M$, $X,Y \in \distr_q$\,.
\end{itemize}
\end{definition}
\begin{remark} Condition $i)$, which is trivial in the Riemannian case, is necessary to define isometries in the sub-Riemannian case. Actually, it also implies that all the higher order distributions are preserved by $\phi_*$, i.e. $\phi_*(\distr^i_q) = \distr^i_{\phi(q)}$, for $1\leq i \leq m$.
\end{remark}
\begin{definition}
Let $M$ be a manifold equipped with a volume form $\mu \in \Omega^n(M)$. We say that a (local) diffeomorphism $\phi:M \to M$ is a \emph{(local) volume preserving transformation} if $\phi^* \mu = \mu$.
\end{definition}
In the Riemannian case, local isometries are also volume preserving transformations for the Riemannian volume. Then, it is natural to ask whether this is true also in the sub-Riemannian setting, for some choice of the volume. The next \rosso{proposition} states that the answer is positive if we choose Popp's volume.

\begin{proposition}\label{p:volumepres}
Sub-Riemannian (local) isometries are volume preserving transformations for Popp's volume.
\end{proposition}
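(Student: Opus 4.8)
The plan is to read off the statement from the explicit formula of Theorem~\ref{t:poppmain}, by checking that a local isometry turns an adapted frame near $\phi(q)$ into an adapted frame near $q$ with matching adapted structure constants. The statement is local, so fix $q\in M$, set $p=\phi(q)$, and shrink neighbourhoods so that $\phi$ is a diffeomorphism onto its image. Pick a local adapted frame $X_1,\dots,X_n$ around $p$ with dual coframe $\nu^1,\dots,\nu^n$, and define the pulled-back frame $Y_i:=(\phi^{-1})_*X_i$ around $q$, with dual coframe $\mu^1,\dots,\mu^n$.

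First I would check that $Y_1,\dots,Y_n$ is again adapted. By the Remark after the definition of isometry, $\phi_*(\distr^i_{q'})=\distr^i_{\phi(q')}$ for all $i$ and all $q'$, hence $(\phi^{-1})_*$ maps $\distr^i$ to $\distr^i$ and $Y_1,\dots,Y_{k_i}$ is a local frame for $\distr^i$; condition $ii)$ says $(\phi^{-1})_*$ is a fibrewise isometry on $\distr$, so $Y_1,\dots,Y_k$ is orthonormal. Next, since $\phi_*Y_j=X_j$ on the image of the neighbourhood, we get $(\phi^*\nu^i)(Y_j)=\nu^i(\phi_*Y_j)=\delta^i_j$, i.e. $\phi^*\nu^i=\mu^i$, and therefore $\phi^*(\nu^1\wedge\dots\wedge\nu^n)=\mu^1\wedge\dots\wedge\mu^n$. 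Finally, $(\phi^{-1})_*$ is a homomorphism for the Lie bracket of vector fields and preserves each $\distr^{j-1}$, so applying it to the defining identity \eqref{eq:sc1intro} written for the $X_i$ yields the same identity for the $Y_i$ with adapted structure constants $b^l_{i_1\dots i_j}\circ\phi$; by \eqref{eq:Bintro} the matrices $B_j$ built from the $Y$-frame equal those of the $X$-frame precomposed with $\phi$.

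Combining these facts with Theorem~\ref{t:poppmain} applied to both frames gives, in a neighbourhood of $q$,
\[
\phi^*\popp=\phi^*\!\left(\frac{1}{\sqrt{\prod_j\det B_j}}\,\nu^1\wedge\dots\wedge\nu^n\right)=\frac{1}{\sqrt{\prod_j\det(B_j\circ\phi)}}\,\mu^1\wedge\dots\wedge\mu^n=\popp,
\]
and since $q$ is arbitrary we conclude $\phi^*\popp=\popp$. I expect no genuine obstacle here: the only care needed is bookkeeping of base points in the pushforward/pullback and the observation that the reduction $\bmod\,\distr^{j-1}$ commutes with $(\phi^{-1})_*$, which is exactly where the Remark (preservation of all the $\distr^i$) is used. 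An equivalent, coordinate-free phrasing would be to note that $\phi_*$ induces a graded isomorphism $\mathrm{gr}_q(\distr)\to\mathrm{gr}_{\phi(q)}(\distr)$ intertwining the iterated bracket maps and isometric on $\distr_q$, hence isometric on $\mathrm{gr}_q(\distr)$, and therefore preserving the canonically associated volume element.
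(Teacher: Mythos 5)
Your proof is correct, but it follows a genuinely different route from the paper's. The paper argues directly from the intrinsic definition of Popp's volume: it shows that $\phi_*$ induces maps $\widetilde{\phi}_*:\otimes^i\distr_q\to\otimes^i\distr_{\phi(q)}$ and $\widehat{\phi}_*:\distr^i_q/\distr^{i-1}_q\to\distr^i_{\phi(q)}/\distr^{i-1}_{\phi(q)}$, proves via the intertwining relation $\pi_i\circ\widetilde{\phi}_*=\widehat{\phi}_*\circ\pi_i$ and the minimal-norm characterization of Lemma~\ref{l:mont1} that $\widehat{\phi}_*$ is an isometry of each quotient, hence of the orthogonal direct sum $\tx{gr}_q(\distr)$, and concludes because Popp's volume is the canonical volume of $\tx{gr}_q(\distr)$ transported to $T_qM$. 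This is exactly the ``coordinate-free phrasing'' you sketch in your last sentence, so you have correctly identified the conceptual core. Your main argument instead leans on Theorem~\ref{t:poppmain}: you pull back an adapted frame by $(\phi^{-1})_*$, verify it is again adapted (using the preservation of the flag and of the metric on $\distr$), observe that the adapted structure constants, hence the matrices $B_j$, transform by precomposition with $\phi$, and read off $\phi^*\popp=\popp$ from the explicit formula. The bookkeeping is right, including the point that $(\phi^{-1})_*(fX)=(f\circ\phi)\,(\phi^{-1})_*X$ and that reduction $\bmod\,\distr^{j-1}$ commutes with the pushforward. What your route buys is a purely computational verification that requires no further structural lemmas once Theorem~\ref{t:poppmain} is available; what the paper's route buys is independence from the formula and a cleaner conceptual statement (the induced map on the nilpotentization is an isometry), which is also the natural setting for the uniqueness result of Proposition~\ref{p:volumetrans}.
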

\rosso{Proposition}~\ref{p:volumepres} may be false for volumes different than Popp's one. We have the following.
\begin{proposition}\label{p:volumetrans}
Let $\iso (M)$ be the group of isometries of the sub-Riemannian manifold $M$. If $\iso (M)$ acts transitively on $M$, then Popp's volume is the unique volume (up to multiplication by scalar constant) such that \rosso{Proposition}~\ref{p:volumepres} holds true.
\end{proposition}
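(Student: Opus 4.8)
The plan is to pin down any isometry-invariant volume form by comparing it with Popp's volume and then using transitivity of the isometry action to force the comparison factor to be constant.

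First I would let $\mu$ be any volume form on $M$ satisfying the conclusion of Proposition~\ref{p:volumepres}, i.e. $\phi^*\mu = \mu$ for every $\phi \in \iso(M)$. Since $\popp$ is nowhere vanishing, there is a unique $f \in C^\infty(M)$, nowhere vanishing, such that $\mu = f\,\popp$. That $f$ is smooth and never zero is the only mildly delicate bookkeeping point, and it is immediate because $f$ is the pointwise ratio of two volume forms.

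Next I would exploit that both volumes are invariant. By Proposition~\ref{p:volumepres} we have $\phi^*\popp = \popp$, and by assumption $\phi^*\mu = \mu$, for every $\phi\in\iso(M)$. Pulling back the identity $\mu = f\,\popp$ by $\phi$ and using $\phi^*(f\,\popp) = (f\circ\phi)\,\phi^*\popp$ gives
\[
f\,\popp \;=\; \mu \;=\; \phi^*\mu \;=\; (f\circ\phi)\,\popp ,
\]
hence $f\circ\phi = f$ for every isometry $\phi$; that is, $f$ is constant on each orbit of $\iso(M)$.

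Finally I would invoke transitivity of the action: for any $p,q\in M$ pick $\phi\in\iso(M)$ with $\phi(p)=q$, so that $f(q)=f(\phi(p))=f(p)$. Thus $f\equiv c$ for some nonzero constant $c$, and $\mu = c\,\popp$. Conversely, every nonzero constant multiple of $\popp$ is isometry invariant by Proposition~\ref{p:volumepres}, so these are precisely the volumes for which Proposition~\ref{p:volumepres} holds, which is the assertion. I do not expect a genuine obstacle here: essentially all the content is carried by Proposition~\ref{p:volumepres} (already proved), with the rest being the elementary orbit argument; in particular no extra connectedness or orientability hypothesis is needed beyond what is implicit in speaking of a volume form together with a transitive isometry action.
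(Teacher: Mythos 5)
Your argument is correct and is essentially the same as the paper's: write one volume as a nowhere-vanishing smooth multiple of the other, use invariance of both under $\iso(M)$ to conclude the ratio is $\iso(M)$-invariant, and then use transitivity to make it constant. The only cosmetic difference is that the paper writes $\popp = f\mu$ rather than $\mu = f\popp$.
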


\begin{definition}
Let $M$ be a Lie group. A sub-Riemannian structure $(M,\distr,\metr{\cdot}{\cdot})$ is left invariant if $\forall g \in M$, the left action $L_g:M\to M$ is an isometry.
\end{definition}

As a trivial consequence of \rosso{Proposition}~\ref{p:volumepres} we recover a well-known result (see again \cite{montgomerybook}).
\begin{corollary}\label{c:haar}
Let $(M,\distr, \metr{\cdot}{\cdot})$ be a left-invariant sub-Riemannian structure. Then Popp's volume is left invariant, i.e. $L_g^* \popp = \popp$ for every $g \in M$.
\end{corollary}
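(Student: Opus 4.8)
The plan is to derive Corollary~\ref{c:haar} directly from \rosso{Proposition}~\ref{p:volumepres}, since the notion of left invariance is defined exactly so as to make this deduction immediate. Recall that, by definition, the sub-Riemannian structure $(M,\distr,\metr{\cdot}{\cdot})$ being left invariant means precisely that for every $g \in M$ the left translation $L_g: M \to M$ is a sub-Riemannian isometry. Thus I would simply fix $g \in M$, observe that $L_g \in \iso(M)$, and apply \rosso{Proposition}~\ref{p:volumepres} to the diffeomorphism $\phi = L_g$ to conclude $L_g^* \popp = \popp$. As $g$ was arbitrary, this proves the claim.

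An alternative, more self-contained route uses Theorem~\ref{t:poppmain}. Since $L_{g*}$ preserves $\distr$ and, by the remark following the definition of isometry, all the higher order distributions $\distr^i$, one can left-translate an adapted basis of $T_eM$ to obtain a global left-invariant adapted frame $X_1,\dots,X_n$. For such a frame the structure constants $c^l_{ij}$ of Eq.~\eqref{eq:sc0intro}, and hence the adapted structure constants $b^l_{i_1\dots i_j}$ of Eq.~\eqref{eq:sc1intro}, are constant functions on $M$; therefore the matrices $B_j$ of Eq.~\eqref{eq:Bintro} are constant, and the coframe $\nu^1,\dots,\nu^n$ dual to $X_1,\dots,X_n$ is left invariant. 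Consequently the right-hand side of Eq.~\eqref{eq:poppmainintro} is a left-invariant $n$-form, and since a left-invariant form on a Lie group is determined by its value at the identity, $L_g^*\popp = \popp$ for all $g$.

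There is essentially no obstacle here: all the substantive work sits in \rosso{Proposition}~\ref{p:volumepres}, namely that \emph{arbitrary} sub-Riemannian isometries preserve Popp's volume. The only minor point to verify in the alternative argument is the existence of a left-invariant adapted frame, which is guaranteed by the fact, recorded in the remark, that $L_{g*}$ preserves the whole flag $\distr^1 \subset \dots \subset \distr^m = TM$; once this is noted, the remainder of the argument is routine.
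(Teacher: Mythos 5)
Your primary argument is exactly the paper's: the corollary is stated there as an immediate consequence of Proposition~\ref{p:volumepres}, since left invariance means precisely that each $L_g$ is an isometry. The alternative route via Theorem~\ref{t:poppmain} and a left-invariant adapted frame is also sound, but it is not needed and is not the path the paper takes.
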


\rosso{Propositions} \ref{p:volumepres}, \ref{p:volumetrans} and Corollary~\ref{c:haar} should shed some light about which is the \virg{most natural} volume for sub-Riemannian manifold.

\section{Sub-Riemannian geometry}

We recall some basic definitions in sub-Riemannian geometry. For a more detailed introduction, see \cite{nostrolibro,bellaiche,montgomerybook}.
\bdeff
A \emph{sub-Riemannian manifold} is a triple $(M,\distr,\metr{\cdot}{\cdot})$, 
where
\bi
\iii[$(i)$] $M$ is a connected orientable smooth manifold of dimension $n\geq 3$;
\iii[$(ii)$] $\distr \subset TM$ is a smooth distribution of constant rank $k< n$;
\iii[$(iii)$] $\metr{\cdot}{\cdot}_q$ is an inner product on the fibres $\distr_{q}$, smooth as a function of $q$. 
\ei
\edeff
Let $\Gamma(\distr)\subset \VecM$ be the $C^{\infty}(M)$-module of the smooth sections of $\distr$. Throughout this paper we assume that the sub-Riemannian manifold $M$ satisfies the \emph{bracket-generating condition}, i.e.
\bqn \label{Hor}
\text{span}\{[X_1,[X_2,\ldots,[X_{j-1},X_j]]](q)~|~X_i\in\Gamma(\distr),\, j\in \N\}=T_qM, \quad \all q\in M.
\eqn
In other words, the iterated Lie brackets of smooth sections of $\distr$ span the whole tangent bundle $TM$. Condition \eqref{Hor} is also called \emph{H\"ormander condition}, and bracket-generating distribution are also referred to as \emph{completely nonholonomic} distributions.

\rosso{An absolutely continuous} curve $\g:[0,T]\to M$ is said to be \emph{horizontal} (or \emph{admissible}) if 
$$\dot\g(t)\in\distr_{\g(t)}\qquad \text{ for a.e. } t\in[0,T].$$

Given an horizontal curve $\g:[0,T]\to M$, the {\it length of $\g$} is
\bqn
\label{e-lunghezza}
\ell(\g)=\int_0^T |\dot{\g}(t)|~dt.
\eqn
The {\it distance} induced by the sub-Riemannian structure on $M$ is the 
function
\bqn
\label{e-dipoi}
d(q_0,q_1)=\inf \{\ell(\g)\mid \g(0)=q_0,\g(T)=q_1, \g\ \mathrm{horizontal}\}.
\eqn
The connectedness of $M$ and the bracket-generating condition guarantee the finiteness and the continuity of the sub-Riemannian distance with respect to the topology of $M$ (Chow-Rashevsky Theorem, see, for instance, \cite{agrachevbook}). The function $d(\cdot,\cdot)$ is called the \emph{Carnot-Caratheodory distance} and gives to $M$ the structure of metric space (see \cite{bellaiche,gromov}).

Locally (i.e. on an open set $U\subset M$), there always exists a set of $k$ smooth vector fields $X_1,\dots,X_k$ such that, $\forall q \in U$, it is an orthonormal basis of $\distr_q$. The set $\Pg{X_1,\ldots,X_k}$ is called a \emph{local orthonormal frame} for the sub-Riemannian structure.

\bdeff
Let $\distr$ be a distribution. Its \emph{flag at $q\in M$} is the sequence of vector spaces $\distr_q^0\subset\distr_q^{1}\subset\distr_q^{2}\subset\ldots\subset T_qM$ defined by
\begin{equation*}
\distr^0_q:= \{0\},\qquad\distr_q^{1}:=\distr_q,\qquad\distr_q^{i+1}:=\distr_q^{i}+[\distr^{i},\distr]_q\,,
\end{equation*}
where, with a standard abuse of notation, we understand that $[\distr^i,\distr]_q$ is the vector space generated by the iterated Lie brackets, up to length $i$, of local sections of the distribution, evaluated at $q$.
\edeff
Even though the rank of $\distr$ is constant, the dimensions of the subspaces of the flag, i.e. the numbers $k_i(q):= \dim(\distr_q^i)$ may depend on the point. Observe that the bracket-generating condition can be rewritten as follows
\begin{equation*}
\forall q \in M \ \ \exists  \ \  \text{minimal} \ \  m(q)\in \N \quad \text{ such that }\quad k_m(q) = \dim\,T_qM.
\end{equation*}
The number $m(q)$ is called the \emph{step} of the distribution at the point $q$. The vector $\mc{G}(q):=(k_1(q),k_2(q),\dots,k_m(q))$ is called the \emph{growth vector} of the distribution at $q$.

\bdeff
A distribution $\distr$ is \emph{equiregular} if the growth vector is constant, i.e. for each $i=1,2,\ldots,m$, $k_i(q) = \dim (\distr^{i}_q)$ does not depend on $q\in M$. In this case the subspaces $\distr^i_q$ are fibres of the \emph{higher order distributions} $\distr^i \subset TM$.
\edeff
For equiregular distributions we will simply talk about growth vector and step of the distribution, without any reference to the point $q$. 

Finally, we introduce the nilpotentization of the distribution at the point $q$, which is fundamental for the definition of Popp's volume.
\bdeff Let $\distr$ be an equiregular distribution of step $m$. The \emph{nilpotentization of $\distr$} at the point $q\in M$ is the graded vector space
\begin{equation*}
\text{gr}_q(\distr) = \distr_q \oplus \distr_q^2/\distr_q \oplus \ldots \oplus \distr_q^m/\distr_q^{m-1}.
\end{equation*}
\edeff

The vector space $\text{gr}_q(\distr)$ can be endowed with a Lie algebra structure, which respects the grading. Then, there is a unique connected, simply connected group, $\text{Gr}_q(\distr)$, such that its Lie algebra is $\text{gr}_q(\distr)$. The global, left-invariant vector fields obtained by the group action on any orthonormal basis of $\distr_q\subset \text{gr}_q(\distr)$ defines a sub-Riemannian structure on $\text{Gr}_q(\distr)$, which is called the \emph{nilpotent approximation} of the sub-Riemannian structure at the point $q$.

\section{Popp's volume}\label{s:popp}

In this section we provide the definition of Popp's volume, and we prove Theorem~\ref{t:poppmain}. Our presentation follows closely the one \rosso{that} can be found in \cite{montgomerybook}. The definition rests on the following lemmas.

\bl \label{l:mont1} 
Let $E$ be an inner product space, and let $\pi:E\to V$ be a surjective linear map. Then $\pi$ induces an inner product on $V$ \rosso{such that the length of $v \in V$ is}
\bqn \label{eq:final}
\|v\|_V = \min\{ \|e\|_E \text{ s.t. } \pi(e) = v \}\,.
\eqn
\el
\begin{proof}
\rosso{It is easy to check that Eq. \eqref{eq:final} defines a norm on $V$. Moreover, since $\|\cdot\|_{E}$ is induced by an inner product, i.e. it satisfies the parallelogram identity, it follows that $\|\cdot\|_{V}$ satisfies the parallelogram identity too. Notice that this is equivalent to consider the inner product on $V$ defined by the linear isomorphism $\pi: (\ker \pi)^\perp \to V$. Indeed the length of $v \in V$ is the length of the shortest element $e \in \pi^{-1}(v)$.}
\end{proof}
\bl \label{l:mont2} 
Let $E$ be a vector space of dimension $n$ with a flag of linear subspaces $\{0\} = F^0 \subset F^1\subset F^2 \subset \ldots\subset F^m = E$. Let $\tx{gr}(F) = F^1\oplus F^2/F^1\oplus \ldots \oplus F^m/F^{m-1}$ be the associated graded vector space. Then there is a canonical isomorphism $\theta: \wedge^n E \to \wedge^n \tx{gr}(F)$. 
\el
\begin{proof}
We only give a sketch of the proof. For $0\leq i \leq m$, let $k_i:= \dim F^i$. Let $X_1,\dots,X_n$ be a adapted basis for $E$, i.e. \rosso{$X_1,\dots, X_{k_i}$} is a basis for $F^i$. We define the linear map $\widehat{\theta}: E \to \tx{gr}(F)$ which, for $0\leq j \leq m-1$, takes $X_{k_j+1}, \dots, X_{k_{j+1}}$ to the corresponding equivalence class in $F^{j+1}/F^j$. This map is indeed a non-canonical isomorphism, which depends on the choice of the adapted basis. In turn, $\widehat{\theta}$ induces a map $\theta : \wedge^n E \to \wedge^n \tx{gr}(F)$, which sends $X_1\wedge\ldots\wedge X_n$ to $\widehat{\theta}(X_1)\wedge\ldots\wedge\widehat{\theta}(X_n)$. The proof that $\theta$ does not depend on the choice of the adapted basis is \virg{dual} to \cite[Lemma 10.4]{montgomerybook}.
\end{proof}

The idea behind Popp's volume is to define an inner product on each $\distr^i_q/\distr^{i-1}_q$ which, in turn, induces an inner product on the orthogonal direct sum $\text{gr}_q(\distr)$. The latter has a natural volume form, which is the canonical volume of an inner product space obtained by wedging the elements an orthonormal dual basis. Then, we employ Lemma~\ref{l:mont2} to define an element of $(\wedge^n T_q M)^*\simeq \wedge^n T_q^* M$, which is Popp's volume form computed at $q$.

Fix $q \in M$. Then, let $v,w \in \distr_q$, and let $V,W$ be any horizontal extensions of $v,w$. Namely, $V,W \in \Gamma(\distr)$ and $V(q) =v$, $W(q) = w$. The linear map $\pi : \distr_q\otimes\distr_q \to \distr_q^2/\distr_q$
\begin{equation}
\pi(v\otimes w):= [V,W]_q \mod \distr_q\,,
\end{equation} 
is well defined, and does not depend on the choice the horizontal extensions. Indeed let $\widetilde{V}$ and $\widetilde{W}$ be two different horizontal extensions of $v$ and $w$ respectively. Then, in terms of a local frame $X_1,\dots,X_k$ of $\distr$
\begin{equation}
\widetilde{V} = V + \sum_{i=1}^k f_i X_i\,,\qquad 
\widetilde{W} = W + \sum_{i=1}^k g_i X_i\,,
\end{equation}
where, for $1\leq i \leq k$, $f_i,g_i \in C^\infty(M)$ and $f_i(q) = g_i(q) = 0$. Therefore
\bqn
[\widetilde{V}, \widetilde{W}] = [V,W] + \sum_{i=1}^k  \left(V(g_i) - W(f_i)\right )   X_i +  \sum_{i,j=1}^k f_i g_j [X_i,X_j] \,.
\eqn
Thus, evaluating at $q$, $[\widetilde{V},\widetilde{W}]_q = [V,W]_q \mod \distr_q$, as claimed.
Similarly, let $1\leq i \leq m$. The linear maps $\pi_i: \otimes^i \distr_q \to \distr_q^i/\distr_q^{i-1}$
\begin{equation}\label{eq:pimap}
\pi_i(v_1\otimes\dots\otimes v_i) = [V_1,[V_2,\dots,[V_{i-1},V_i]]]_q \mod \distr^{i-1}_q\,,
\end{equation}
are well defined and do not depend on the choice of the horizontal extensions $V_1,\dots,V_i$ of $v_1,\dots,v_i$.

By the bracket-generating condition, $\pi_i$ are surjective and, by Lemma~\ref{l:mont1}, they induce an inner product space structure on $\distr_q^i/\distr_q^{i-1}$. Therefore, the nilpotentization of the distribution at $q$, namely 
\bqn
\text{gr}_q(\distr) = \distr_q \oplus \distr_q^2/\distr_q \oplus \ldots \oplus \distr_q^m/\distr_q^{m-1}\,,
\eqn
is an inner product space, as the orthogonal direct sum of a finite number of inner product spaces. As such, it is endowed with a canonical volume (defined up to a sign) $\mu_q \in \wedge^n\text{gr}_q(\distr)^*$, which is the volume form obtained by wedging the elements of an orthonormal dual basis.

Finally, Popp's volume (computed at the point $q$) is obtained by transporting the volume of $\text{gr}_q(\distr)$ to $T_q M$ through the map $\theta_q :\wedge^n T_q M \to \wedge^n \text{gr}_q(\distr)$ defined in Lemma~\ref{l:mont2}. Namely 
\bqn\label{eq:popppoint}
\popp_q =\theta_{q}^{*}(\mu_{q})= \mu_q \circ \theta_q\,,
\eqn
where $\theta_{q}^{*}$ denotes the dual map and  we employ the canonical identification $(\wedge^n T_q M)^* \simeq \wedge^n T^*_q M$. Eq.~\eqref{eq:popppoint} is defined only in the domain of the chosen local frame. Since $M$ is orientable, with a standard argument, these $n$-forms can be glued together to obtain Popp's volume $\popp \in \Omega^n(M)$. The smoothness of $\popp$ follows directly from Theorem~\ref{t:poppmain}.

\brem The definition of Popp's volume can be restated as follows.
Let $(M,\distr)$ be an oriented sub-Riemannian manifold. Popp's volume is the unique volume $\popp$ such that, for all $q \in M$, the following diagram is commutative:
\begin{displaymath}
\begin{CD}
(M,\distr) @>{\popp}>> (\wedge^n T_q M)^* \\
@V{\tx{gr}_q}VV @VV{\theta^*_q}V\\
\tx{gr}_q(\distr) @>>\mu > (\wedge^n \tx{gr}_q(\distr))^*
\end{CD}
\end{displaymath}
where $\mu$ associates the inner product space $\tx{gr}_q(\distr)$ with its canonical volume $\mu_q$, and $\theta^*_q$ is the dual of the map defined in Lemma~\ref{l:mont2}.

\erem

\subsection{Proof of Theorem~\ref{t:poppmain}}
We are now ready to prove Theorem~\ref{t:poppmain}. For convenience, we first prove it for a distribution of step $m =2$. Then, we discuss the general case. In the following subsections, everything is understood to be computed at a fixed point $q\in M$. Namely, by $\text{gr}(\distr)$ we mean the nilpotentization of $\distr$ at the point $q$, and by $\distr^i$ we mean the fibre $\distr^i_q$ of the appropriate higher order distribution.

\subsubsection{Step $2$ distribution}

If $\distr$ is a step $2$ distribution, then $\distr^2 = T M$. The growth vector is $\mc{G}=(k,n)$. We choose $n-k$ independent vector fields $\{Y_l\}_{l=k+1}^n$  such that $X_1,\dots,X_k,Y_{k+1},\dots,Y_n$ is a local adapted frame for $TM$. Then
\begin{equation}\label{struc1}
[X_i,X_j] = \sum_{l=k+1}^n b_{ij}^l Y_l \mod \distr \,.
\end{equation}
For each $l = k+1,\dots,n$, we can think to $b^l_{ij}$ as the components of an Euclidean vector in $\mathbb{R}^{k^2}$, which we denote by the symbol $b^l$. According to the general construction of Popp's volume, we need first to compute the inner product on the orthogonal direct sum $\text{gr}(\distr) = \distr \oplus \distr^2 / \distr$. By Lemma~\ref{l:mont1}, the norm on $\distr^2/\distr$ is induced by the linear map $\pi: \otimes^2 \distr  \to \distr^2/\distr$ 
\begin{equation}
\pi(X_i\otimes X_j )= [X_i,X_j] \mod \distr \,.
\end{equation}
The vector space $\otimes^2 \distr$ inherits an inner product from the one on $\distr$, namely $\forall X,Y,Z,W \in \distr$, $\langle X\otimes Y, Z\otimes W\rangle = \langle X,Z\rangle\langle Y,W\rangle$. $\pi$ is surjective, then we identify the range $\distr^2/\distr$ with $\ker\pi^\perp \subset \otimes^2\distr$, and define an inner product on $\distr^2/\distr$ by this identification.
In order to compute explicitly the norm on $\distr^2/\distr$ (and then, by polarization, the inner product), let $Y\in\distr^2/\distr$. 
Then
\begin{equation}\label{eq:normstep2}
\norm{\distr^2/\distr}{Y} = \min\{\norm{\otimes^2\distr}{Z} \text{ s.t. } \pi(Z) = Y \}\,.
\end{equation}
Let $Y = \sum_{l=k+1}^n c^l Y_l$ and $Z = \sum_{i,j=1}^k a_{ij} X_i\otimes X_j \in \otimes^2\distr$. We can think to $a_{ij}$ as the components of a vector $a \in \mathbb{R}^{k^2}$. Then, Eq.~\eqref{eq:normstep2} writes
\begin{equation}
\norm{\distr^2/\distr}{Y} = \min\{|a| \text{ s.t. } a\cdot b^l = c^l,\, l=k+1,\dots,n \}\,,
\end{equation}
where $|a|$ is the Euclidean norm of $a$, and the dot denotes the Euclidean inner product. Indeed, $\norm{\distr^2/\distr}{Y}$ is the Euclidean distance of the origin from the affine subspace of $\mathbb{R}^{k^2}$ defined by the equations $a\cdot b^l = c^l$ for $l=k+1,\dots,n$. In order to find an explicit expression for $\norm{\distr^2/\distr}{Y}^2$ in terms of the $b^l$, we employ the Lagrange multipliers technique. Then, we look for extremals of 
\bqn
L(a,b^{k+1},\dots,b^n,\lambda_{k+1},\dots,\lambda_n) = |a|^2 -2 \sum_{l=k+1}^n \lambda_l (a\cdot b^l - c^l)\,.
\eqn
We obtain the following system
\begin{equation}\label{lag}
\begin{cases}
\displaystyle\sum_{l=k+1}^n \lambda_l \cdot b^l -a = 0, & \\
\displaystyle \sum_{l=k+1}^n \lambda_l b^l\cdot b^r = c^r\,, \qquad r = k+1,\dots,n. & 
\end{cases}
\end{equation}
Let us define the $n-k$ square matrix $B$, with components $B^{hl} = b^h\cdot b^l$. $B$ is a Gram matrix, which is positive definite iff the $b^l$ are $n-k$ linearly independent vectors. These vectors are exactly the rows of the representative matrix of the linear map $\pi: \otimes^2\distr \to \distr^2/\distr$, which has rank $n-k$. Therefore $B$ is symmetric and positive definite, hence invertible. It is now easy to write the solution of system \eqref{lag} by employing the matrix $B^{-1}$, which has components $B^{-1}_{hl}$. Indeed a straightforward computation leads to
\begin{equation}
\norm{\distr^2/\distr}{c^s Y_s}^2 = c^h  B^{-1}_{hl} c^l \, .
\end{equation}
By polarization, the inner product on $\distr^2/\distr$ is defined, in the basis $Y_l$, by
\begin{equation}
\langle Y_l, Y_h\rangle_{\distr^2/\distr} = B^{-1}_{lh}\,.
\end{equation}
Observe that $B^{-1}$ is the Gram matrix of the vectors $Y_{k+1},\dots,Y_n$ seen as elements of $\distr^2/\distr$. Then, by the definition of Popp's volume, if $\nu^1,\dots,\nu^k,\mu^{k+1},\dots,\mu^n$ is the dual basis associated with $X_1,\dots,X_k,Y_{k+1},\dots,Y_n$, the following formula holds true
\begin{equation}
\popp = \frac{1}{\sqrt{\det{B}}}\, \nu^1 \wedge \dots \wedge \nu^k \wedge \mu^{k+1} \wedge \dots \wedge \mu^n\,.
\end{equation}

\subsubsection{General case}

In the general case, the procedure above can be carried out with no difficulty. Let $X_1,\dots,X_n$ be a local adapted frame for the flag $\distr^0 \subset \distr \subset\distr^2\subset\dots\subset \distr^m$. As usual $k_i = \dim(\distr^i)$. For $j=2,\ldots,m$ we define the adapted structure constants $b^l_{i_1\dots \,i_j}\in C^{\infty}(M)$ by
\bqn \label{eq:sc1}
[X_{i_1},[X_{i_2}, \dots,[X_{i_{j-1}},X_{i_j}]]] = \sum_{l = k_{j-1}+1}^{k_j} b^l_{i_1 i_2 \dots \,i_j} X_l \mod   \distr^{j-1}\,,
\eqn
where $1\leq i_1,\dots,i_j\leq k$. Again, $b^l_{i_1\dots i_j}$ can be seen as the components of a vector $b^l \in \mathbb{R}^{k^j}$.

Recall that for each $j$ we defined the surjective linear map $\pi_j:\otimes^j \distr \to \distr^j/\distr^{j-1}$
\begin{equation}
\pi_j(X_{i_1}\otimes X_{i_2}\otimes \dots \otimes X_{i_j}) = [X_{i_1},[X_{i_2}, \dots,[X_{i_{j-1}},X_{i_j}] ]] \mod \distr^{j-1}\,.
\end{equation}
Then, we compute the norm of an element of $\distr^j/\distr^{j-1}$ exactly as in the previous case. It is convenient to define, for each $1\leq j \leq m$, the $k_j-k_{j-1}$ dimensional square matrix $B_{j}$, of components
\begin{equation}\label{Binverse}
\left[B_{j}\right]^{hl} = \sum_{i_1,i_2,\dots,i_j = 1}^{k} b^h_{i_1 i_2 \dots i_j} b^l_{i_1 i_2 \dots i_j}\,.
\end{equation}
with the understanding that $B_1$ is the $k\times k$ identity matrix. Each one of these matrices is symmetric and positive definite, hence invertible, due to the surjectivity of $\pi_j$. The same computation of the previous case, applied to each $\distr^j/\distr^{j-1}$ shows that the matrices $B_j^{-1}$ are precisely the Gram matrices of the vectors $X_{k_{j-1}+1},\dots,X_{k_j} \in \distr^j/\distr^{j-1}$, in other words
\begin{equation}
\langle X_{k_{j-1}+l},X_{k_{j-1}+h}\rangle_{\distr^j/\distr^{j-1}} = B^{-1}_{lh}\,.
\end{equation}
Therefore, if $\nu^1,\dots,\nu^n$ is the dual frame associated with $X_1,\dots,X_n$, Popp's volume is
\begin{equation}
\popp = \frac{1}{\sqrt{\prod_{j=1}^m\det B_j}}\; \nu^1\wedge \ldots \wedge \nu^n \,.
\end{equation}
\subsection{Examples}
In this section we compute Popp's volume for some specific equiregular sub-Riemannian structures. We also discuss, through an example, the non-equiregular case.

\subsubsection{Contact manifolds}

Contact manifolds are a well-known class of sub-Riemannian structures. We recall the basic definition first, then we compute Popp's volume in terms of a canonical operator associated with a contact structure.

\bdeff
Let $\omega \in \Omega^1(M)$ be a one-form on $M$. Let $\distr$ be the $n-1$ dimensional distribution $\distr := \ker\omega$. We say that $\omega$ is a \emph{contact form} if $d\omega|_\distr$ is non degenerate. In this case, $\distr$ is a called \emph{contact distribution}.  A sub-Riemannian structure $(M,\distr,\metr{\cdot}{\cdot})$, where $\distr$ is a contact distribution, is is called \emph{contact sub-Riemannian manifold}.
\edeff
Notice that the non-degeneracy assumption implies that the dimension of $M$ is odd.
Observe that any contact manifold satisfies the bracket-generating condition, is equiregular, has step $2$, and its growth vector is $\mc{G} = (n-1,n)$.

With any contact form $\omega$ we can associate its \emph{Reeb vector field}, which is the unique vector field $X_{0}$ satisfying the conditions $\omega(X_{0})=1$ and $d\omega(X_{0},\cdot)=0$. Notice that, given a local orthonormal frame $X_{1},\ldots,X_{k}$ for the distribution, then $X_{1},\ldots,X_{k},X_{0}$ is a local  adapted frame, since $X_{0}$ is transversal to $\distr$.

The contact form $\omega$ induces a linear bundle map (i.e. a fibre-wise linear map) $J : \distr \to \distr$, defined by $\metr{J X}{Y} = d\omega(X,Y)$, $\forall X,Y \in \distr$.
Observe that the restriction $J_q$ of $J$ to the fibres of $\distr$ is a linear skew-symmetric operator on the inner product space $(\distr_q,\metr{\cdot}{\cdot}_q)$. Hence its Hilbert-Schmidt norm $\|J_{q}\|$ is well defined by the formula $\|J_{q}\|^{2} = \sum_{i,j=1}^k \metr{X_i}{J X_j}^2 $.

\bp Let $M$ be a contact sub-Riemannian manifold and $J:\distr \to \distr$ as above. Let $\nu^{1},\ldots,\nu^{k},\nu^{0}$ be the dual frame associated with the local adapted frame $X_{1},\ldots,X_{k},X_{0}$,  where $X_{0}$ is the Reeb vector field. Then
\bqn\label{eq:poppcontact}
\popp=\frac{1}{\|J_{q}\|}\nu^{1}\wedge\ldots\wedge\nu^{k}\wedge\nu^{0}\,,
\eqn
where $\|J_{q}\|$ is the Hilbert-Schmidt norm of $J_{q}$. \ep
\begin{proof}
Let $X_{1},\ldots,X_{k},X_{0}$ be a local adapted frame, where $X_{0}$ is the Reeb vector field associated with the contact form. Then, for $1\leq i,j\leq k$, the structure constants satisfy
\begin{align}
&[X_{i},X_{j}]=\sum_{l=1}^{k}c_{ij}^{l}X_{l}+c_{ij}^{0}X_{0}\,,\\
&[X_{i},X_{0}]=\sum_{l=1}^{k}c_{i0}^{l}X_{l}\,.
\end{align}
By Eq.~\eqref{eq:poppmainintro}, $\popp = \sqrt{g}\, \nu^{1}\wedge \ldots\wedge \nu^{k}\wedge \nu^{0}$ where $g=1/\sum_{i,j=1}^{k} (c_{ij}^{0})^{2}$. Then the statement follows from the identity
\begin{equation}\label{eq:normaliz0}
\|J\|^{2} = \sum_{i,j=1}^k \metr{X_i}{J X_j}^2 = \sum_{i,j=1}^k d\omega(X_i,X_j)^2 = \sum_{i,j=1}^k \omega([X_i,X_j])^2 = \sum_{i,j=1}^k (c_{ij}^{0})^{2}\,.
\end{equation}
Observe that, in the last equality of Eq.~\eqref{eq:normaliz0}, we employed Cartan formula for the differential of a one-form, and the fact that $\omega(X_i) = 0$.
\end{proof}

Eq.~\eqref{eq:poppcontact} can be expressed in terms of the eigenvalues of $J$. See also \cite[Remark 30]{corank1}, where the authors exhibit this formula for the case $\mc{G} = (4,5)$.

\brem
Let $f\in C^{\infty}(M)$ be a smooth, non-vanishing function. Then $\omega$ and $\omega':=f\omega$ define the same contact distribution $\distr$. However $d\omega'\neq fd\omega$ and, in general, the associated Reeb vector field is different. On the other hand, as a consequence of the identity $d\omega'|_{\distr}=fd\omega|_{\distr}$, it follows that $J'=f J$. Therefore, it is convenient to choose a \virg{normalized} contact form, which is uniquely specified (up to a sign) by the condition $\|J_q\|^2 = 1$, $\forall q \in M$. Then, in terms of the Reeb vector field associated with the normalized contact form, $\popp = \nu^{1}\wedge \ldots\wedge \nu^{k}\wedge \nu^{0}$. 
\erem

%
\subsubsection{Carnot groups of step 2}
A Carnot group $\mathbb{G}$ of step $2$ is a left-invariant sub-Riemannian structure on a nilpotent, connected, simply connected Lie group whose Lie algebra $\mathfrak{g}$ admits a  stratification $\mathfrak{g}=V_{1} \oplus  V_{2}$  with $[V_{1},V_{1}]=V_{2}$ and $[V_{1},V_{2}]=[V_{2}, V_{2}]=\{0\}$. The sub-Riemannian structure is defined by left translation of the subspace $V_{1}$, where we choose an orthonormal basis $X_1,\dots,X_k$.
It is possible to choose a basis $Y_{k+1},\ldots,Y_{n}$ of $V_{2}$ such that
$$[X_{i},X_{j}]=\sum_{h=k+1}^{n}b_{ij}^{h}Y_{h},\qquad [X_{i},Y_{h}]=[Y_{h},Y_{l}]=0.$$
Using the standard exponential coordinates (i.e. the identification of the Lie group and its Lie algebra via the exponential map) the explicit expression for the associated left-invariant vector fields in $\R^{n}=\{(x,y)\,|\, x\in \R^{k},y\in \R^{n-k}\}$ is 
\begin{gather}
X_i=\partial_{x_i}+\frac12 \sum_{j,h} b_{ij}^{h} x_j \partial_{y_{h}}\,, \qquad i=1,\ldots,k\,, \\
Y_{h}=\partial_{y_h}\,,\qquad  h=k+1,\ldots,n\,.
\end{gather}
In \cite{corank2}, the authors employed the skew-symmetric matrices $L^h$, $k+1\leq h\leq n$, of components $[L^{h}]_{ij}=b_{ij}^{h}$ in order to investigate the nilpotent approximation of a step 2 sub-Riemannian structure. In terms of these matrices, 
\bqn
B^{hl} = (L^h, L^l)\,,
\eqn
where $(M,N):= \text{Tr}(M^T N)$ is the Hilbert-Schmidt inner product on $\text{GL}(k,\mathbb{R})$. If the $L$ matrices are orthonormal, Eq.~\eqref{eq:poppmainintro} gives
\bqn
\popp = dx^1\wedge \ldots \wedge dx^k \wedge dy^{k+1} \wedge \ldots \wedge dy^n\,.
\eqn
The last formula is (up to a constant factor) the definition of Popp's volume employed in \cite[Definition 4]{corank2} and \cite{corank2g}, given in terms of a global adapted frame.

\subsubsection{Non-equiregular case}
The basic example of a bracket-generating, non-equiregular sub-Riemannian structure is the so-called \emph{Martinet distribution}. This is the distribution on $\mathbb{R}^3$ defined by the kernel of the one-form $\theta := dz - y^2 dx$. \rosso{A global frame for $\distr$, which we declare orthonormal,}  is
\bqn
X = \partial_x + y^2 \partial_z\,,\qquad
Y = \partial_y\,.
\eqn
Let $Z:=\partial_z$. Then $[X,Y] = -2y Z$ and $[Y,[X,Y]] = 2 Z$. Observe that $X,Y,Z$ is a global (adapted) frame for $TM$, therefore Martinet distribution is bracket-generating. However, its growth vector is
\bqn
\mc{G}(x,y,z) = \begin{cases}
(2,3) & \text{if }y \neq 0\,, \\
(2,2,3) & \text{if }y = 0 \,,
\end{cases}
\eqn
and the distribution is not equiregular on the hyperplane $y=0$. Nevertheless, if we restrict to the connected components of $\{y\neq 0\}$, we obtain a step $2$ equiregular sub-Riemannian manifold. Here, Theorem~\ref{t:poppmain} gives the following expression:
\bqn\label{eq:poppmartinet}
\popp = \frac{1}{|y|}dx\wedge dy \wedge dz\,.
\eqn
Eq.~\eqref{eq:poppmartinet} shows that singularities arise precisely on the hypersurface where the equiregularity hypotesis fails. In \cite{boscain-laurent}, the authors investigate the properties of the sub-Laplacian associated with this volume in the Martinet structure. They show that the sub-Laplacian is essentially self-adjoint in each connected component of $\{y\neq0\}$, hence the hyperplane $\{y=0\}$ acts as a barrier for the heat propagation.

\section{Sub-Laplacian}
In this section we define the canonical sub-Laplacian associated with a generic volume form and we prove Corollary~\ref{c:sublapl}, namely an explicit formula for the sub-Laplacian associated with Popp's volume.

On a Riemannian manifold, the Laplace-Beltrami operator is defined as the divergence of the gradient. This definition can be easily generalized to the sub-Riemannian setting.

\bdeff
Let $f\in C^{\infty}(M)$. The \emph{horizontal gradient} of $f$ is the unique horizontal vector field $\grad f$ such that
\begin{equation}
\metr{\grad f}{X} = X(f)\,, \qquad \forall X \in \Gamma(\distr)\,.
\end{equation}
\edeff
It follows from the definition that, in terms of a local frame $X_1,\dots,X_k$ for $\distr$
\begin{equation}\label{eq:grad}
\grad f =\sum_{i=1}^k X_i(f) X_i\,.
\end{equation}

\bdeff
Let $\mu \in \Omega^n(M)$ be a positive volume form, and $X \in \VecM$. The \emph{$\mu$-divergence} of $X$ is the smooth function $\dive_\mu X$ defined by 
\begin{equation}
\mathcal{L}_X \mu = \dive_\mu X\mu \,.
\end{equation}
where $\mathcal{L}_X$ is the Lie derivative in the direction $X$.
\edeff
Notice that the definition of divergence does not depend on the orientation of $M$, namely the sign of $\mu$. The divergence measures the rate at which the volume of a region changes under the integral flow of a field. Indeed, for any compact $\Omega\subset M$ and $t$ sufficiently small, let $e^{tX} : \Omega \to M$ be the flow of $X \in \VecM$, then
\begin{equation}
\left.\frac{d}{dt}\right|_{t=0} \int_{e^{tX}(\Omega)} \mu = - \int_\Omega \dive_\mu X \mu \,.
\end{equation}
The next proposition is sometimes employed as an alternative definition of divergence. Let $C^\infty_0(M)$ be the space of smooth functions with compact support.
\begin{proposition}\label{t:divtheo}
For any $f\in C_0^\infty(M)$ and $X\in \VecM$
\bqn
\int_M f \dive_\mu X \mu = - \int_M X(f) \mu\,.
\eqn
\end{proposition}
\begin{proof}
The proof is an easy consequence of the definition of $\mu$-divergence.
\end{proof}

The next lemma gives the relation between divergences associated with different volumes.
\begin{lemma}\label{l:divtrans}
Let $\mu, \mu' \in \Omega^n(M)$ be volume forms. Let $f \in C^{\infty}(M)$, $f \neq 0$ such that $\mu' = f \mu$. Then, for any $ X\in \VecM$
\begin{equation}\label{eq:divtrans}
\dive_{\mu'} X  = \dive_\mu X + X(\log f) \,.
\end{equation}
\end{lemma}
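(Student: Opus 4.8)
The plan is to compute $\mathcal{L}_X \mu'$ in two ways and compare. By definition of the $\mu'$-divergence, $\mathcal{L}_X \mu' = (\dive_{\mu'} X)\,\mu' = (\dive_{\mu'} X)\, f\mu$. On the other hand, writing $\mu' = f\mu$ and using the Leibniz rule for the Lie derivative acting on the product of a function and a form, together with $\mathcal{L}_X f = X(f)$, I get
\begin{equation*}
\mathcal{L}_X \mu' = \mathcal{L}_X(f\mu) = X(f)\,\mu + f\,\mathcal{L}_X\mu = X(f)\,\mu + f\,(\dive_\mu X)\,\mu\,,
\end{equation*}
where the last equality uses the definition of the $\mu$-divergence.

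Equating the two expressions gives $(\dive_{\mu'} X)\, f\mu = \big(X(f) + f\,\dive_\mu X\big)\mu$. Since $\mu$ is a volume form it is nowhere vanishing, so the scalar coefficients must agree: $(\dive_{\mu'} X)\, f = X(f) + f\,\dive_\mu X$. Dividing by $f$, which is allowed because $f$ is nowhere zero by hypothesis, yields
\begin{equation*}
\dive_{\mu'} X = \dive_\mu X + \frac{X(f)}{f} = \dive_\mu X + X(\log f)\,,
\end{equation*}
which is exactly \eqref{eq:divtrans}. (If one prefers not to assume $f>0$, the same computation gives $X(f)/f = X(\log|f|)$, so the formula holds with $\log|f|$ in place of $\log f$; the chain rule $X(\log|f|) = X(f)/f$ is all that is needed.)

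There is essentially no obstacle here: the only point requiring a word of justification is that $\mu$ being a volume form lets us cancel it from both sides of an identity between $n$-forms, and that the hypothesis $f\neq 0$ everywhere makes the division by $f$ legitimate. Both are immediate.
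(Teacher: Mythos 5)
Your proof is correct and follows essentially the same route as the paper: apply the Leibniz rule to $\mathcal{L}_X(f\mu)$ and compare with the defining identity for $\dive_{\mu'}$. Your parenthetical about $\log|f|$ when $f$ is not assumed positive is a fair point of care, but otherwise there is nothing to add.
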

\begin{proof}
It follows from the Leibniz rule $\mc{L}_X(f\mu)=(Xf)\mu+f\mc{L}_X\mu=(X(\log f)+\dive_\mu X) f\mu$.
\end{proof}
When no confusion may arise, we write \virg{$\dive$}, without any reference to the volume form $\mu$. In the following, we fix the reference volume to be Popp's one. Lemma~\ref{l:divtrans} can be used to generalize the results to the case of a generic $\mu$-divergence.

With a divergence and a gradient at our disposal, we are ready to define the sub-Laplacian associated with the volume form $\mu$.
\bdeff
Let $\mu \in \Omega^n(M)$, $f \in C^{\infty}(M)$. The \emph{sub-Laplacian} associated with $\mu$ is the second order differential operator
\begin{equation}
\Delta f := \dive\lp \grad f\rp\,,
\end{equation}
\edeff
This definition reduces to the Laplace-Beltrami operator when $\mu$ is the Riemannian volume. \rosso{As a consequence of Eq.~\eqref{eq:grad} and the Leibniz rule for the divergence $\dive(fX)=Xf+f\,\dive(X)$, we can find the expression of the sub-Laplacian in terms of any local frame $X_1,\dots,X_k$:}
\begin{equation}
\rosso{\dive \lp \grad f \rp = \sum_{i=1}^k \dive\lp X_i(f)X_i\rp =\sum_{i=1}^k X_i(X_i(f)) + \dive (X_i) X_i(f)\,}.
\end{equation}
Then
\begin{equation}\label{eq:sublaplframe}
\lapl = \sum_{i=1}^k X_i^2 + \dive (X_i) X_i\,.
\end{equation}
\begin{remark}
Observe that the second order term of $\lapl$, namely the \virg{sum of squares} in Eq.~\eqref{eq:sublaplframe}, does not depend on the choice of the volume. Indeed, only the first order terms depend on it through the divergence operator, which changes according to Lemma~\ref{l:divxi} upon a change of volume.
\end{remark}
\begin{remark}
If we apply Proposition~\ref{t:divtheo} to the horizontal gradient $\grad g$, we obtain
\bqn
\int_M f \lapl g \mu= -\int_M \metr{\grad f}{\grad g} \mu\,, \qquad \forall f,g \in C^\infty_0(M)\,.
\eqn
Then $\lapl$ is symmetric and negative on $C_0^\infty(M)$. It can be proved that it is also essentially self-adjoint (see \cite{strichartz}).
\end{remark}

Now we prove a useful formula for the divergence associated with Popp's volume. Analogous formulae for $\mu$-divergences are easily obtained by an application of Lemma~\ref{l:divtrans}.
\begin{lemma}\label{l:divxi}
Let $X_1,\dots,X_n$ be a local adapted frame. Let $\dive$ be the divergence associated with Popp's volume. Then, for $i =1,\dots,n$
\begin{equation}\label{eq:divxi}
\dive X_i = - \lp \frac{1}{2}\sum_{j=1}^m \tx{Tr}(B^{-1}_j X_i(B_j)) + \sum_{l=1}^n c_{il}^l\rp  X_i\,.
\end{equation}
\end{lemma}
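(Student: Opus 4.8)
The plan is to compute the Lie derivative $\mathcal{L}_{X_i}\popp$ directly from the explicit formula for Popp's volume given by Theorem~\ref{t:poppmain}, namely $\popp = \rho\, \nu^1\wedge\cdots\wedge\nu^n$ with $\rho := (\prod_{j=1}^m \det B_j)^{-1/2}$, and then read off $\dive X_i$ from $\mathcal{L}_{X_i}\popp = (\dive X_i)\popp$. The Lie derivative splits, by the Leibniz rule, into a contribution from differentiating the scalar density $\rho$ and a contribution from $\mathcal{L}_{X_i}(\nu^1\wedge\cdots\wedge\nu^n)$.

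First I would handle the density term. Since $\rho$ is a nowhere-vanishing function, $X_i(\rho)/\rho = X_i(\log\rho) = -\tfrac12 X_i\big(\sum_j \log\det B_j\big) = -\tfrac12\sum_{j=1}^m \operatorname{Tr}(B_j^{-1} X_i(B_j))$, using the standard identity $X_i(\log\det B_j) = \operatorname{Tr}(B_j^{-1}X_i(B_j))$ (valid because each $B_j$ is positive definite, hence invertible). This immediately accounts for the first group of terms in Eq.~\eqref{eq:divxi}.

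Next I would handle the frame term. Write $\mathcal{L}_{X_i}(\nu^1\wedge\cdots\wedge\nu^n) = \sum_{l=1}^n \nu^1\wedge\cdots\wedge(\mathcal{L}_{X_i}\nu^l)\wedge\cdots\wedge\nu^n$. To evaluate $\mathcal{L}_{X_i}\nu^l$, use the duality $\nu^l(X_a)=\delta^l_a$ together with the Cartan-type formula $(\mathcal{L}_{X_i}\nu^l)(X_a) = X_i(\nu^l(X_a)) - \nu^l([X_i,X_a]) = -\nu^l([X_i,X_a]) = -c^l_{ia}$, where $c^l_{ia}$ are the structure constants of Eq.~\eqref{eq:sc0intro}. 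Hence $\mathcal{L}_{X_i}\nu^l = -\sum_a c^l_{ia}\,\nu^a$, and plugging this back, each wedge in the sum over $l$ picks out only the $a=l$ term, giving $\mathcal{L}_{X_i}(\nu^1\wedge\cdots\wedge\nu^n) = -\big(\sum_{l=1}^n c^l_{il}\big)\,\nu^1\wedge\cdots\wedge\nu^n$. This produces the remaining $-\sum_l c^l_{il}$ term.

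Combining, $\mathcal{L}_{X_i}\popp = X_i(\rho)\,\nu^1\wedge\cdots\wedge\nu^n + \rho\,\mathcal{L}_{X_i}(\nu^1\wedge\cdots\wedge\nu^n) = \big(X_i(\log\rho) - \sum_l c^l_{il}\big)\popp$, which is exactly Eq.~\eqref{eq:divxi}. I do not anticipate a genuine obstacle here: the only points needing a little care are justifying $X_i(\log\det B_j)=\operatorname{Tr}(B_j^{-1}X_i(B_j))$ (which is routine given invertibility of $B_j$) and getting the signs right in the Cartan formula for $\mathcal{L}_{X_i}\nu^l$ — the minus sign there is the single most error-prone step, so I would double-check it by testing on a low-dimensional example such as a contact structure. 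Note also that the formula is stated only for adapted frames because Theorem~\ref{t:poppmain} gives the density $\rho$ in that form; the structure constants $c^l_{il}$ in the last term, however, range over the full frame $1\le l\le n$.
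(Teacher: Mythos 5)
Your proposal is correct and follows essentially the same route as the paper: compute $\mathcal{L}_{X_i}\popp$ via the Leibniz rule, splitting it into the scalar-density term (handled by $X_i(\log\det B_j)=\tx{Tr}(B_j^{-1}X_i(B_j))$) and the frame term (handled by $\mathcal{L}_{X_i}\nu^l=-\sum_a c^l_{ia}\nu^a$, which is the paper's Eq.~\eqref{eq:sc0dual}). You spell out the $\log\det$ identity that the paper leaves implicit, but the argument is the same.
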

\begin{proof}
Let $\nu \in \Omega^1(M)$, and $X,Y \in \VecM$. The Lie derivative obeys Leibniz rule:
\bqn
\mathcal{L}_X \left(\nu (Y)\right) = (\mathcal{L}_X \nu )(Y) +\nu(\mathcal{L}_X Y)\,.
\eqn
Then, if $\nu^1,\dots,\nu^n$ is the dual frame associated with $X_1,\dots,X_n$ 
\bqn\label{eq:sc0dual}
\mathcal{L}_{X_i} \nu^j  = - \sum_{l=1}^n c_{il}^j \nu^l\,,
\eqn
which is the \virg{dual formulation} of Eq.~\eqref{eq:sc0intro}. By Theorem~\ref{t:poppmain}, Popp's volume is
\bqn
\popp=\frac{1}{\sqrt{\prod_{j} \det B_{j}}} \,\nu ^1 \wedge \ldots \wedge \nu^n \,.
\eqn
Then, for $i=1,\dots,n$,
\begin{multline}\label{eq:multline}
\mathcal{L}_{X_i} \popp =  \sqrt{\prod_{j} \det B_{j}}\, X_i\lp \frac{1}{\sqrt{\prod_{j} \det B_{j}}} \rp  \popp +\\ + \frac{1}{\sqrt{\prod_{j} \det B_{j}}}\lp \mathcal{L}_{X_i}\nu^1\wedge \ldots \wedge \nu^n + \ldots + \nu^1\wedge\ldots\wedge \mathcal{L}_{X_i} \nu^n \rp \,.
\end{multline}
Eq.~\eqref{eq:divxi} now follows from the definition of divergence, Eq.~\eqref{eq:sc0dual} and Eq.~\eqref{eq:multline}.
\end{proof}

Finally, Corollary~\ref{c:sublapl} is a straightforward consequence of Lemma~\ref{l:divxi} and Eq.~\eqref{eq:sublaplframe}. \qed

\section{Volume preserving transformations}

This section is devoted to the proof of \rosso{Propositions}~\ref{p:volumepres} and~\ref{p:volumetrans}.

\subsection{Proof of \rosso{Proposition}~\ref{p:volumepres}}

Let $\phi \in \iso (M)$ be a (local) isometry, and $1\leq i \leq m$. The differential $\phi_*$ induces a linear map 
\bqn
\widetilde{\phi}_* : \otimes^i \distr_q \to \otimes^i \distr_{\phi(q)}\,.
\eqn
Moreover $\phi_*$ preserves the flag $\distr \subset \ldots \subset \distr^m$. Therefore, it induces  a linear map
\bqn
\widehat{\phi}_*: \distr^i_q/\distr^{i-1}_q \to \distr^i_{\phi(q)}/\distr^{i-1}_{\phi(q)}\,.
\eqn
The key to the proof of \rosso{Proposition}~\ref{p:volumepres} is the following lemma.

\begin{lemma}
$\widetilde{\phi}_*$ and $\widehat{\phi}_*$ are isometries of inner product spaces.
\end{lemma}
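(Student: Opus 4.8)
The plan is to show separately that $\widetilde{\phi}_*$ is an isometry of the tensor-power inner product spaces, and then that $\widehat{\phi}_*$ is an isometry of the quotient inner product spaces, deducing the second from the first together with the compatibility of $\phi_*$ with the bracket-generating maps $\pi_i$.

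\medskip

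\noindent\textbf{Step 1: $\widetilde{\phi}_*$ is an isometry.} Since $\phi$ is an isometry, for every $q$ the map $\phi_* : \distr_q \to \distr_{\phi(q)}$ preserves the inner product by condition $ii)$ of the definition of isometry. The inner product on $\otimes^i \distr_q$ is the one induced functorially by the inner product on $\distr_q$ (i.e. $\metr{v_1\otimes\cdots\otimes v_i}{w_1\otimes\cdots\otimes w_i} = \prod_{s=1}^i \metr{v_s}{w_s}$), and $\widetilde\phi_* = \phi_*^{\otimes i}$ on decomposable tensors. Hence $\widetilde\phi_*$ preserves the inner product on decomposables, and by bilinearity on all of $\otimes^i \distr_q$. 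This step is essentially immediate.

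\medskip

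\noindent\textbf{Step 2: compatibility with $\pi_i$.} The crucial observation is the intertwining relation $\widehat\phi_* \circ \pi_i = \pi_i \circ \widetilde\phi_*$. To see this, take $v_1,\dots,v_i \in \distr_q$ with horizontal extensions $V_1,\dots,V_i \in \Gamma(\distr)$. Because $\phi$ is a diffeomorphism with $\phi_*(\distr) = \distr$, the pushforwards $\phi_* V_1, \dots, \phi_* V_i$ are horizontal extensions of $\phi_* v_1, \dots, \phi_* v_i$ near $\phi(q)$. Since pushforward by a diffeomorphism is a Lie algebra homomorphism, $\phi_*[V_1,[V_2,\dots,[V_{i-1},V_i]]] = [\phi_* V_1,[\phi_* V_2,\dots,[\phi_* V_{i-1},\phi_* V_i]]]$, and $\phi_*$ sends $\distr^{i-1}_q$ to $\distr^{i-1}_{\phi(q)}$ (by the remark following the definition of isometry); evaluating at $q$ and passing to the quotient gives the stated identity on decomposable tensors, hence everywhere by linearity.

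\medskip

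\noindent\textbf{Step 3: $\widehat\phi_*$ is an isometry.} Recall from Lemma~\ref{l:mont1} that the inner product on $\distr^i_q/\distr^{i-1}_q$ is characterized by $\norm{\distr^i_q/\distr^{i-1}_q}{v} = \min\{\norm{\otimes^i\distr_q}{z} : \pi_i(z) = v\}$. Fix $v \in \distr^i_q/\distr^{i-1}_q$. Since $\widetilde\phi_*$ is a linear isomorphism, it restricts to a bijection $\pi_i^{-1}(v) \to \pi_i^{-1}(\widehat\phi_* v)$ by the intertwining relation of Step 2; and since $\widetilde\phi_*$ preserves the norm on $\otimes^i\distr_q$ by Step 1, it maps the minimizer over $\pi_i^{-1}(v)$ to a minimizer over $\pi_i^{-1}(\widehat\phi_* v)$ of the same norm. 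Hence $\norm{}{\widehat\phi_* v} = \norm{}{v}$, and $\widehat\phi_*$ is an isometry; polarization upgrades this to preservation of the inner product.

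\medskip

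\noindent I do not expect a genuine obstacle here: the content is entirely formal, resting on the diffeomorphism-invariance of the Lie bracket and the variational characterization of the induced quotient metric. The only point requiring a little care is Step 2 — making sure that horizontal extensions push forward to horizontal extensions (which uses $\phi_*\distr = \distr$) and that the bracket construction is genuinely natural, so that the diagram relating $\pi_i$, $\widetilde\phi_*$ and $\widehat\phi_*$ commutes; once that is in place Step 3 is an easy min-over-fibres argument.
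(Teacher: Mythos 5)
Your proof is correct and follows essentially the same route as the paper: the tensor-power case is immediate from condition $ii)$, the intertwining relation $\pi_i\circ\widetilde{\phi}_*=\widehat{\phi}_*\circ\pi_i$ follows from naturality of the Lie bracket under pushforward, and the quotient case is the min-over-fibres argument plus polarization. The only difference is that you spell out the proof of the intertwining relation (which the paper dismisses as "a consequence of the properties of the Lie brackets"), and this added detail is welcome; just note that the pushed-forward bracket should be evaluated at $\phi(q)$, not at $q$.
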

\begin{proof}
The proof for $\widetilde{\phi}_*$ is trivial. The proof for $\widehat{\phi}_*$ is as follows. Remember that the inner product on $\distr^i/\distr^{i-1}$ is induced by the surjective maps $\pi_i : \otimes^i \distr \to \distr^i/\distr^{i-1}$ defined by Eq.~\eqref{eq:pimap}. Namely, let $Y \in \distr^i_q/\distr^{i-1}_q$. Then
\bqn
\|Y\|_{\distr^i_q/\distr^{i-1}_q} = \min\{ \|Z\|_{\otimes \distr_q} \text{ s.t. } \pi_i(Z) = Y\}\,.
\eqn
As a consequence of the properties of the Lie brackets, $\pi_i \circ \widetilde{\phi}_* = \widehat{\phi}_* \circ \pi_i$. Therefore
\bqn
\|Y\|_{\distr^i_q/\distr^{i-1}_q} = \min\{ \|\widetilde{\phi}_* Z\|_{\otimes \distr_{\phi(q)}} \text{ s.t. } \pi_i(\widetilde{\phi}_* Z) = \widehat{\phi}_* Y\} = \| \widehat{\phi}_* Y \|_{\distr^i_{\phi(q)}/\distr^{i-1}_{\phi(q)}}\,.
\eqn
By polarization, $\widehat{\phi}_*$ is an isometry.
\end{proof}

Since $\tx{gr}_q(\distr) = \oplus_{i=1}^m \distr_q^i/\distr_q^{i-1}$ is an orthogonal direct sum, $\widehat{\phi}_*: \tx{gr}_q(\distr) \to \tx{gr}_{\phi(q)}(\distr)$ is also an isometry of inner product spaces. 

Finally, Popp's volume is the canonical volume of $\tx{gr}_q(\distr)$ when the latter is identified with $T_q M$ through any choice of a local adapted frame. Since $\phi_*$ is equal to $\widehat{\phi}_*$  under such an identification, and the latter is an isometry of inner product spaces, the result follows. \qed

\subsection{Proof of \rosso{Proposition}~\ref{p:volumetrans}}

Let $\mu$ be a volume form such that $\phi^* \mu = \mu$ for any isometry $\phi \in \iso (M)$. There exists $f\in C^\infty(M)$, $f\neq 0$ such that $\popp = f \mu$. It follows that, for any $\phi \in \iso (M)$
\bqn
f \mu = \popp = \phi^* \popp = (f\circ \phi)\, \phi^* \mu = (f\circ \phi)\, \mu \,,
\eqn
where we used the $\iso (M)$-invariance of Popp's volume. Then also $f$ is $\iso (M)$-invariant, namely $\phi^* f = f$ for any $\phi \in \iso (M)$. By hypothesis, the action of $\iso (M)$ is transitive, then $f$ is constant. \qed

\vspace{0.2cm}
{\bf Acknowledgements.} The first author has been supported by the European Research Council, ERC StG 2009 \virg{GeCoMethods}, contract number 239748, by the ANR Project GCM, program \virg{Blanche}, project number NT09-504490.

{\small
\bibliographystyle{siam}
\bibliography{bibpopp}
}

\end{document}